\newtheorem{theorem}{Theorem}[section]
\newtheorem{prop}{Proposition}[section]
\theoremstyle{remark}
\newtheorem{assumption}{Assumption}
\newtheorem{remark}{Remark}
\theoremstyle{definition}
\newtheorem{example}{Example}
\newcommand{\E}{\mathbb{E}}
\providecommand{\keywords}[1]{\textbf{\textit{Keywords:}} #1}
\title{Adaptive Gradient Descent for Optimal Control of Parabolic Equations with Random Parameters}
\author[1]{Yanzhao Cao\thanks{yzc0009@auburn.edu}}
\author[1]{Somak Das \thanks{szd0041@auburn.edu}}
\author[1]{Hans-Werner van Wyk \thanks{hzv0008@auburn.edu}}
\affil[1]{Department of Mathematics and Statistics, Auburn University, Auburn AL}
\date{\today}
\begin{document}

\maketitle

\begin{abstract}
In this paper we extend the adaptive gradient descent (AdaGrad) algorithm to the optimal distributed control of parabolic partial differential equations with uncertain parameters. This stochastic optimization method achieves an improved convergence rate through adaptive scaling of the gradient step size. We prove the convergence of the algorithm for this infinite dimensional problem under suitable regularity, convexity, and finite variance conditions, and relate these to verifiable properties of the underlying system parameters. Finally, we apply our algorithm to the optimal thermal regulation of lithium battery systems under uncertain loads.  
\end{abstract}

\keywords{optimal control, stochastic optimization, thermal regulation, uncertain systems}

\section{Introduction}

Stochastic optimization algorithms have increasingly found a use in the design of deterministic regulators for uncertain systems. Such problems arise in open loop control systems exhibiting statistical variations that cannot be observed by the controller. The control must consequently be designed to be robust in light of predicted uncertainties to ensure a desired statistical behavior of the system, as encoded by an appropriate risk function. In this paper we measure risk in terms of the mean squared deviation of the controlled state from a desired reference state, but other risk functions, such as Conditional Value at Risk \cite{Rockafellar2002}, are also possible. In this work we formulate and  analyze the adaptive gradient descent (AdaGrad) method for an optimal control problem constrained by a partial differential equation with uncertain coefficients.  

We focus on the distributed control of uncertain parabolic systems. These problems arise naturally in the thermal regulation of lithium battery systems \cite{Wang2017, Guo2010}, which show great promise for building grid-level energy storage systems because of their high energy and power density, low discharge rate, and increasingly low production cost. The variability of operating conditions, of the manufacturing process, and of the degradation of batteries over their life cycle gives rise to uncertainties in the thermal properties of such systems. Their effect on material properties such as local resistivity, can be quantified through various methods, including empirical testing, and the use of battery degradation models \cite{Xu2018,Liu2015a}. Lumped parameter and equivalent circuits models, commonly used in the design of efficient battery management systems \cite{Bergveld2002}, can fail to detect internal thermal conditions, especially under irregular operating conditions \cite{Forgez2010}. This suggests the use of an optimal controller, designed in response to statistical variations to ensure a desired average overall temperature over a range of conditions and states of degradation.

In mathematical terms, the design of an optimal control for uncertain systems can be formulated as a deterministic, infinite dimensional optimization problem  whose cost function takes the form of a stochastic integral. Stochastic gradient (SG) algorithms are line search methods in which the gradient of the risk function is replaced by random gradient samples, resulting in iterations that are much cheaper to compute than those obtained by a full approximation of the gradient. These stochastic optimization methods, originally conceived in \cite{Robbins1951}, and fundamental in the development machine learning algorithms, have recently garnered attention in the context of infinite dimensional optimization (see e.g. \cite{Martin2018a, Geiersbach2019}). They are particularly well-suited to large scale problems in which the risk function is strongly convex and the underlying uncertainty is sufficiently complex to warrant the use of Monte Carlo sampling in approximating the stochastic integrals. It can be shown (see e.g. \cite{Bottou2018}) that for strongly convex risk functions whose  sample gradients are Lipschitz continuous with bounded variance, the stochastic gradient method with appropriately chosen step-sizes converges at the rate $O(1/j)$, where $j$ is the number of gradient evaluations. The convergence rate is optimal among first order methods, according to the complexity bounds established in \cite{Agarwal2009}. In comparison, a gradient-based deterministic optimization scheme, coupled with standard Monte Carlo approximation, has a convergence rate of $O(1/\sqrt{j})$. The cost-reduction offered by SG iterations is especially pertinent in the context of optimal control, where the evaluation of the sample gradient involves the numerical approximation of two partial differential equations, namely the state and the adjoint equations (see Section \ref{section:problem_setting}). 

While the SG iteration converges for a range of predetermined stepsizes, its convergence rate can vary widely. Moreover, the step size rule that guarantees the optimal convergence rate depends on the gradient's Lipschitz constant, its variance, and its strong convexity parameter, all of which are difficult to estimate in general. This practical shortcoming has led to investigations into adaptive step size rules that use information obtained during the iteration to adjust the step sizes on the fly. The Adaptive Gradient (AdaGrad) method, developed concurrently in \cite{Duchi2011} and \cite{McMahan2010}, scales the step size by the cumulative sum of gradients sampled thus far. Despite the widespread use of AdaGrad and its extensions, such as the Root Mean Square Propagation (RMSProp), or the Adaptive Moment Estimation (Adam) algorithms \cite{Hinton2012,Kingma2014}, theoretical insight into its robustness and convergence has remained elusive until recently \cite{ward19a, Li2019a}, even in the finite dimensional case. In \cite{Li2019a}, the authors prove the convergence of the AdaGrad method for both strongly convex and general functions over finite dimensional parameter spaces, obtaining convergence rates in the expected optimality gap that interpolate between $1/j$ and $1/\sqrt{j}$. In their proof the authors require a bound on the initial step size parameter that involves the risk function's Lipschitz constant. In \cite{ward19a}, the authors prove a weaker form of convergence for a general risk function without imposing any conditions on the step size in terms of the underlying problem parameters.

This paper aims to extend the AdaGrad method to infinite dimensional distributed control systems constrained by parabolic PDEs with uncertainties. In Section \ref{section:problem_setting} we outline the optimal control problem and introduce the AdaGrad algorithm. In Section \ref{section:convergence} we establish convergence of the AdaGrad algorithm and relate its requirements on the risk function's regularity, convexity, and finite variance to verifiable properties of the system's uncertain parameters. The numerical experiments in Section \ref{section:numerical_experiments} illustrate the algorithm's theoretical properties derived in previous section, as well its appplication to a thermal regulation problem. In Section \ref{section:conclusion}, we offer concluding remarks.

\section{Problem Setting}
\label{section:problem_setting}

Let $(\Omega, \mathcal F, \mathbb{P})$ be a complete probability space encoding the uncertainties in our system, $D \subset \mathbb{R}^d$, $d=1,2,3$, be a physical domain with boundary $\partial D$, and $T>0$ be some terminal time. The system's state $y:D\times [0,T]\times \Omega \rightarrow \mathbb{R}$ is then defined as the random field satisfying 
\begin{equation}
\label{eq:state}
	\begin{cases}
    \frac{dy}{dt} - \nabla (a \nabla y) = g + u, & x \in D,\ t \in [0,T], \\
    y = 0, & x \in \partial D, \ t\in [0,T],  \\
    y(\cdot,0) = y_0, & x \in D 
	\end{cases}
\end{equation}
almost surely (a.s.)\! on $\Omega$. For convenience, we define the differential operator $\mathcal L = \frac{d}{dt} - \nabla (a\nabla \cdot)$. Uncertainties in the system can arise from the diffusion coefficient $a$, the forcing term $g$, or the initial condition $y_0$. The deterministic function $u \in U := L^2(D\times[0,T])$ represents the distributed control to be determined through optimization. To ensure the state equation's well-posedness, we make the following assumptions.

\begin{assumption}
\label{ass:well_posedness}
The diffusion coefficient $a\in L^\infty([0,T]\times D \times \Omega)$ satisfies the coercivity condition
\begin{equation}
0 < a_{\min} \leq a(x,t,\omega) \leq a_{\max} < \infty, \qquad x \in D, \ t \in [0,T], \ \omega \in \Omega.
\label{eq:coercivity}
\end{equation}
for constants $a_{\min}, a_{\max} \in \mathbb{R}$. The forcing term $g$ is a square integrable mapping $g:\Omega \rightarrow L^2([0,T],H^{-1}(D))$, i.e. $g \in L^2(\Omega, L^2([0,T],H^{-1}(D))$, and the initial condition $y_0 \in L^2(D \times \Omega)$.
\end{assumption} 

As a consequence, Equation \eqref{eq:state} has a unique solution $y$ for every control $u$ \cite{Babuska2007}. The assumption on the uniform coercivity can be relaxed somewhat to allow for lognormal diffusion coefficients, see \cite{Gittelson2010}. In the following, we will often find it useful to refer to $y$ in terms of its dependence on various subsets of $x,t, \omega$, and the control $u$, e.g. $y(u,\omega)$ or $y(x,t,\omega)$. Throughout the paper, we will use $\langle \cdot, \cdot \rangle$ and $\|\cdot \|$ to refer to the inner product and norm associated with $L^2([0,T]\times D)$, i.e. for $v \in L^2([0,T]\times D)$,
\[
\|v\|^2 = \int_0^T \int_D |v(x,t)|^2 dx\;dt.
\]

\subsection{The Optimal Control Problem}

In optimal control we seek a function $u$ that steers the corresponding state $y(u)$ to track a desired reference solution $y_d$. This function is often deterministic, but here we need only require $y_d \in L^2([0,T]\times D \times \Omega)$, allowing us to encode the state's desired behavior on a statistical level. We measure the deviation of $y(u)$ from $y_d$ in a mean-squared sense over all possible state realizations. Furthermore, we constrain the control $u$ to lie within the admissible set 
\[
U_{ad} = \{u \in U: \|u\| \leq u_{\max}\}, 
\]
for a given constant $0<u_{\max} \leq \infty$. The optimal control problem can thus be stated as
\begin{equation}
\label{eq:optimal_control}
\min_{u\in  U_{ad}} F(u),
\end{equation}
where the objective- or risk function is 
\[
F(u) = \E[f(u)] = \int_\Omega f(u,\omega)d\mathbb{P}(\omega),
\]
and 
\[
f(u,\omega) = \frac{1}{2}\|y(u,\omega)-y_d(\omega)\|^2  + \frac{\alpha}{2}\|u\|^2.
\] 

To establish well-posedness of Problem \eqref{eq:optimal_control}, i.e. the existence and uniqueness of an optimal control, we note that the risk function $F$ can be readily decomposed into
\[
F(u) = \pi(u,u) - 2L(u) + C, 
\] 
where
\[
\pi(u,v) = \frac{1}{2}\mathbb{E}\left[\langle y(u)-y_0, y(v)-y_0\rangle\right] + \frac{\alpha}{2}\mathbb{E}\left[\langle u,v \rangle \right]
\]
is a continuous, coercive bilinear form, 
\[
L(u) = \mathbb{E}\left[ \langle y_d-y_0, y(u)-y_0\rangle \right], \ \text{and} \\ 
\]
is a bounded linear form, $C$ is constant in $u$. The result then follows directly from Chapter 1, Theorem 1.1 in \cite{Lions1971}. In fact, it will be shown in Proposition \ref{prop:strong_convexity} that $f(u,\omega)$ is strongly convex a.s. on $\Omega$, which has strong implications on the convergence rate of the AdaGrad method (Algorithm \ref{alg:adagrad}), introduced in Section \ref{subsection:adagrad}.

\subsection{The AdaGrad Algorithm} 
\label{subsection:adagrad}

While Problem \eqref{eq:optimal_control} is ostensibly deterministic, its risk function $F(u)$ is a statistic associated with the uncertain system \eqref{eq:state}. For a given initial guess $u_0$, consider the stochastic gradient iteration
\begin{equation}
\label{eq:sg_iteration}
u_{j+1} = u_j - \eta_j\nabla f_j(u_j),
\end{equation}
where the stochastic gradients $\{\nabla f_j(u_j)\}_{j=1}^\infty$ are idependent, identically distributed samples of $\nabla f(u_j,\omega)$ satisfying $\mathbb{E}[\nabla f_j(u_j)]=F(u_j)$, and $\{\eta_j\}_{j=1}^{\infty}$ is a sequence of positive step sizes. 

\begin{remark}
In cases when the risk function is determined empirically, i.e. by testing of sampling, when the system's statistical distribution is determined empirically, through analysis of test samples, the risk function may take the form
\[
F(u) = \frac{1}{N} \sum_{i=1}^N f_i(u)
\]
sample gradients can be obtained as $\nabla f_{i_j}(u)$, where $i_j$ is drawn uniformly from the set $\{1,\hdots, N\}$ (see \cite{Bottou2018}).
\end{remark}
The established convergence analysis of the SGD iteration \eqref{eq:sg_iteration} for non-convex smooth functions relies on specific conditions on the positive step-sizes $\eta_j$, the most well-known of which is the Robbins-Monro condition (see \cite{Robbins1951}) requiring $\{\eta_j\}_{j=1}^{\infty}$ to be a deterministic sequence of positive numbers that satisfy
\begin{equation}
\label{robbins}
\sum_{j=1}^{\infty} \eta_j = \infty \hspace{5mm} \text{and} \hspace{5mm} \sum_{j=1}^{\infty} \eta_j^2 < \infty.
\end{equation}

A convenient choice of $\eta_j$ is that of a diminishing function of the iteration count, decreasing at the rate $\eta_j= O(\frac{1}{j})$ as $j\rightarrow \infty$. 
As an improvement to this SGD algorithm we apply the Adaptive Gradient Descent algorithm that uses the following adaptive step-size, 
\begin{equation}
\label{eq:adagrad_step size}
\eta_j = \frac{\eta}{\sqrt[]{b_0^2+ \sum\limits_{k=1}^{j-1} \|\nabla f_{k}(u_k)\|^2}}
\end{equation}
where $\eta >0$ and $b_0>0$ are constants. Other versions of AdaGrad scale componentwise. Other variations include the current gradient value (this can lead to steps that are not descent directions in expectation \cite{Li2019a}). 

For the optimal control problem \eqref{eq:optimal_control}, it is a standard result (see \cite{Lions1971}) that $f(u)$ is Fr\'echet differentiable and that the sample gradients $\nabla f(u)$ can be computed as
\begin{equation}
\label{eq:gradient}
\nabla f(u)=p + \alpha u,
\end{equation}
where $p$ is the adjoint state, satisfying 
\begin{equation}
\label{eq:adjoint}
\begin{cases}
-\frac{dp}{dt} - \nabla\cdot (a \nabla p) = y - y_d,  & x \in D, \ t \in [0,T],\\
p(x,t) = 0, & x \in \partial D, \ t \in [0,T], \\
p(x,T) = 0, & x \in D. 
\end{cases}
\end{equation}
Indeed, the directional derivative $D[f(u)](v)$ of $f$ at $u\in U$ in any direction $v \in U$ is given by
\[
D[f(u)](v) = \langle y-y_d, s(v) \rangle + \alpha \langle u, v\rangle,
\]
where $s(v)$ satisfies the sensitivity equation
\begin{equation}
\label{eq:sensitivity}
\begin{cases}
\frac{ds}{dt} - \nabla\cdot (a \nabla s) =v,  & x \in D, \ t \in [0,T],\\
s(x,t) = 0, & x \in \partial D, \ t \in [0,T], \\
s(x,0) = 0, & x \in D. 
\end{cases}
\end{equation}
Denoting by $\mathcal L^* = -\frac{d}{dt} - \nabla (a\nabla \cdot)$ the formal adjoint of $\mathcal L$, we can use Equation \eqref{eq:adjoint}, integration by parts, and Equation \eqref{eq:sensitivity} to write
\begin{align*}
D[f(u)](v) &= \langle y-y_d, s(v) \rangle + \alpha \langle u, v\rangle = \langle \mathcal L^*p, s(v) \rangle + \alpha \langle u, v\rangle \\
&= \langle p, \mathcal L s(v) \rangle + \alpha \langle u, v\rangle = \langle p, v\rangle + \alpha \langle u, v \rangle,   
\end{align*}
from which Formula \eqref{eq:gradient} follows. The AdaGrad algorithm for Problem \eqref{eq:optimal_control} can thus be summarized as follows.
\begin{algorithm}[H]
\caption{AdaGrad for Problem \eqref{eq:optimal_control}}\label{alg:adagrad}
\begin{algorithmic}[1]
\State Initialize $u_0$, $b_0$, $\eta$
\For{$j=0,1,\hdots$}
\State $y_j \gets$ solution of sample primal system \eqref{eq:state}
\State $p_j \gets$ solution of sample adjoint system \eqref{eq:adjoint}
\State $\nabla f_j(u_j) = p_j + \alpha u_j$
\State $u_{j+1} = u_j - \frac{\eta}{b_j} \nabla f_j(u_j) $ 
\State $b_{j+1}^2 = b_j^2 + \|\nabla f_j(u_j)\|^2$
\EndFor  
\end{algorithmic}
\end{algorithm}

\section{Convergence Analysis}\label{section:convergence}

In this section we establish some theoretical properties of the AdaGrad algorithm in the context of the optimal control problem \eqref{eq:optimal_control}. Theorem \ref{thm:convergence} proves its convergence under suitable conditions. To this end we show that the problem's risk function is strongly convex (Proposition \ref{prop:strong_convexity}) and Lipschitz continuous (Proposition \ref{prop:gradient_lipschitz}). Moreover, in Section \ref{subsection:finite_variance} we relate the algorithm's finite variance requirement to the statistical properties of the underlying uncertain inputs. Our convergence analysis is based on that of \cite{Li2019a} whose authors establish convergence for convex risk functions over finite-dimensional parameter space, whose sample gradient functions $\nabla f$  are Lipschitz continuous, unbiased, and uniformly bounded over $u$ and $\omega \in \Omega$.

\subsection{Convexity and Smoothness}
\begin{prop}
\label{prop:strong_convexity}
The mapping $f(\cdot, \omega) :U \rightarrow \mathbb{R}$ is strongly convex a.s. on $\Omega$. Specifically,
\begin{equation}
\label{eq:convexity}
\langle \nabla f(v,\omega) - \nabla f(u,\omega), v-u \rangle \geq \alpha \| v - u \|^2 \qquad \text{ a.s.\! on } \Omega.
\end{equation}
\end{prop}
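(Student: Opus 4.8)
The plan is to exploit the affine dependence of the state $y(u,\omega)$ on the control $u$, together with the adjoint identity already used in the derivation of \eqref{eq:gradient}. Fix $\omega \in \Omega$; by Assumption \ref{ass:well_posedness} the coefficient $a(\cdot,\cdot,\omega)$ is coercive and bounded, so the state, sensitivity, and adjoint equations are all well posed for this realization. Because the state equation \eqref{eq:state} is linear in its data and the control enters only through the source term, the map $u \mapsto y(u,\omega)$ is affine: writing $\bar y$ for the solution with $u \equiv 0$, we have $y(u,\omega) = \bar y + s(u)$, where $s(u)$ solves the sensitivity equation \eqref{eq:sensitivity} with right-hand side $u$. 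Consequently, for any $u,v \in U$, setting $w = v-u$ we obtain $y(v,\omega) - y(u,\omega) = s(v) - s(u) = s(w)$ by linearity of \eqref{eq:sensitivity} in its source.

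Next I would track the difference of gradients. From \eqref{eq:gradient}, $\nabla f(v,\omega) - \nabla f(u,\omega) = \bigl(p(v) - p(u)\bigr) + \alpha\,(v-u)$, where $p(u)$ denotes the adjoint state solving \eqref{eq:adjoint} with right-hand side $y(u,\omega) - y_d$. Since \eqref{eq:adjoint} is linear in its right-hand side, the difference $\psi := p(v) - p(u)$ solves $\mathcal L^* \psi = \bigl(y(v,\omega) - y_d\bigr) - \bigl(y(u,\omega) - y_d\bigr) = s(w)$ subject to the homogeneous boundary and terminal conditions of \eqref{eq:adjoint}.

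The key step is then the same integration-by-parts identity that produces \eqref{eq:gradient}. Pairing $\psi$ against $w$ and using $\mathcal L s(w) = w$ from \eqref{eq:sensitivity}, together with the matching boundary, initial, and terminal conditions of $\psi$ and $s(w)$, gives
\[
\langle p(v) - p(u),\, w\rangle = \langle \psi,\, \mathcal L s(w)\rangle = \langle \mathcal L^* \psi,\, s(w)\rangle = \langle s(w),\, s(w)\rangle = \|s(w)\|^2.
\]
Combining this with the term $\alpha\,(v-u)$ yields
\[
\langle \nabla f(v,\omega) - \nabla f(u,\omega),\, v-u\rangle = \|s(v-u)\|^2 + \alpha\,\|v-u\|^2 \geq \alpha\,\|v-u\|^2,
\]
which is exactly \eqref{eq:convexity}, and strong convexity follows.

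Since the computation is short, the main obstacle is rigor rather than ingenuity: I must justify the integration by parts in the correct Bochner-space setting (the state in $L^2([0,T],H^1_0(D))$ with time derivative in $L^2([0,T],H^{-1}(D))$, and the adjoint in the corresponding space with homogeneous terminal value), so that the boundary terms in both space and time genuinely vanish and the pairing $\langle \mathcal L^* \psi,\, s(w)\rangle = \langle \psi,\, \mathcal L s(w)\rangle$ is legitimate. I would also emphasize that every step is carried out for a.e. fixed $\omega$, using only the realization-wise coercivity guaranteed by Assumption \ref{ass:well_posedness}, which is what delivers the \emph{a.s.\ on $\Omega$} conclusion.
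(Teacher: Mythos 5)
Your proposal is correct and follows essentially the same argument as the paper's proof: both reduce the claim to the cross term $\langle p(v)-p(u),\,v-u\rangle$ and identify it, via the duality $\langle \mathcal L^*\psi,\,\cdot\,\rangle = \langle \psi,\,\mathcal L\,\cdot\,\rangle$, as a squared $L^2$ norm (your $\|s(v-u)\|^2$ is exactly the paper's $\|y(v)-y(u)\|^2$, since the difference of states solves the sensitivity equation with source $v-u$). The extra framing through the affine map $u \mapsto y(u,\omega)$ and the explicit remark about the Bochner-space justification of the integration by parts are fine but do not change the substance of the argument.
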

\begin{proof}
Since the result holds a.s.\! on $\Omega$, we find it notationally convenient not to write $\omega$ explicitly in this proof. Specifically, let $\omega \in \Omega$ be a fixed system realization and let $y(u)$, $p(u)$, and $f(u)$ be the associated state, adjoint variable, and sample cost when subjected to control $u\in U$. Using the form of the gradient given by Equation \eqref{eq:gradient}, we have that for $u,v \in U$, 
\begin{equation}
\label{eq:difference_of_gradients}
\langle \nabla f(v) - \nabla f(u), v-u \rangle = \alpha \| v-u \|^2 + \langle p(v)-p(u), v-u\rangle.
\end{equation}
To prove the proposition, it therefore suffices to show ${\langle p(v)-p(u), v-u\rangle \geq 0}$. To this end, note that subtracting Equation \eqref{eq:state} with control $u$ from that with control $v$ gives
\begin{equation}
\label{eq:difference_of_state}
\mathcal L (y(v) - y(u)) = v-u. 
\end{equation}
Similarly, 
\begin{equation}
\label{eq:difference_of_adjoint}
\mathcal L^* (p(v) - p(u)) = y(v) - y(u).
\end{equation}
Multiplying Equation \eqref{eq:difference_of_state} by $p(v)-p(u)$ and integrating over $D$ and $[0,T]$ gives 
\begin{align*}
\langle \mathcal L(y(v)-y(u)), p(v)-p(u) \rangle = \langle v-u, p(v)-p(u)\rangle,
\end{align*}
which, through integration by parts and Equation \eqref{eq:difference_of_adjoint}, takes the form
\begin{align*}
\langle p(v)-p(u), v-u \rangle &= \langle \mathcal L(y(v)-y(u)), p(v)-p(u) \rangle\\
 &= \langle y(v)-y(u), \mathcal L^*(p(v)-p(u)) \rangle = \| y(v)-y(u) \|^2 \geq 0. 
\end{align*}
This proves Inequality \eqref{eq:convexity}, by virtue of Equation \eqref{eq:difference_of_gradients}.
\end{proof}

\begin{prop}
\label{prop:gradient_lipschitz}
For $f$ defined in \eqref{eq:gradient}, $u,v \in U$, and for $\omega \in \Omega$, we have
\begin{equation}\label{eq:gradient_lipschitz}
\|\nabla f(v) - \nabla f(u)\| \leq M \|v-u\|,
\end{equation}
where $M = \alpha + \frac{C_p^4}{a_{\min}^2}$.
\end{prop}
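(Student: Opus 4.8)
The plan is to exploit the decomposition $\nabla f(v) - \nabla f(u) = \alpha(v-u) + (p(v)-p(u))$ furnished by \eqref{eq:gradient}, so that by the triangle inequality
\[
\|\nabla f(v) - \nabla f(u)\| \leq \alpha\|v-u\| + \|p(v) - p(u)\|.
\]
It then suffices to show $\|p(v)-p(u)\| \leq \frac{C_p^4}{a_{\min}^2}\|v-u\|$, which I would obtain by chaining two parabolic energy estimates, each contributing a factor $C_p^2/a_{\min}$, where $C_p$ denotes the Poincar\'e constant of the fixed domain $D$.

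First I would derive an a priori bound for the state difference $w := y(v)-y(u)$, which by \eqref{eq:difference_of_state} satisfies $\mathcal L w = v-u$ with homogeneous initial and boundary data. Testing this equation against $w$ and integrating over $D$ yields the standard identity
\[
\tfrac12\tfrac{d}{dt}\|w\|_{L^2(D)}^2 + \int_D a|\nabla w|^2\,dx = \langle v-u, w\rangle_{L^2(D)}.
\]
Integrating in time over $[0,T]$, discarding the nonnegative terminal term $\tfrac12\|w(T)\|_{L^2(D)}^2$, and invoking the coercivity \eqref{eq:coercivity} gives $a_{\min}\int_0^T\|\nabla w\|_{L^2(D)}^2\,dt \leq \langle v-u, w\rangle$. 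The Poincar\'e inequality $\|w\|^2 \leq C_p^2\int_0^T\|\nabla w\|_{L^2(D)}^2\,dt$ together with Cauchy--Schwarz then produces $\tfrac{a_{\min}}{C_p^2}\|w\|^2 \leq \|v-u\|\,\|w\|$, that is, $\|y(v)-y(u)\| \leq \tfrac{C_p^2}{a_{\min}}\|v-u\|$.

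Next I would repeat this argument for the adjoint difference $q := p(v)-p(u)$, which by \eqref{eq:difference_of_adjoint} solves the backward problem $\mathcal L^* q = y(v)-y(u)$ with zero terminal and boundary data. The only new wrinkle is that the equation runs backward in time; the substitution $\tau = T - t$ converts it into a forward parabolic equation of exactly the same structure, so the identical energy argument yields $\|p(v)-p(u)\| \leq \tfrac{C_p^2}{a_{\min}}\|y(v)-y(u)\|$. Composing the two bounds gives $\|p(v)-p(u)\| \leq \tfrac{C_p^4}{a_{\min}^2}\|v-u\|$, and substituting into the triangle inequality above completes the proof with $M = \alpha + \tfrac{C_p^4}{a_{\min}^2}$.

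The main obstacle I anticipate is technical rather than conceptual: making the energy estimate fully rigorous requires justifying that $w$ and $q$ have enough regularity for the test-function manipulations and the integration by parts in time to be valid, which rests on the well-posedness guaranteed by Assumption \ref{ass:well_posedness}. One must also fix the precise meaning of $C_p$ and confirm that the same constant governs its use in the space-time norm; since the boundary conditions are homogeneous Dirichlet and $D$ is fixed in time, this is simply the Poincar\'e constant of $H_0^1(D)$ applied slicewise in $t$. Everything else is a routine repetition of the standard parabolic a priori estimate.
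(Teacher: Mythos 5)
Your proposal is correct and follows essentially the same route as the paper's proof: the same decomposition $\nabla f(v)-\nabla f(u) = \alpha(v-u) + (p(v)-p(u))$, followed by two energy estimates (coercivity plus Poincar\'e plus Cauchy--Schwarz) that each yield a factor $\tfrac{C_p^2}{a_{\min}}$, chained to bound $\|p(v)-p(u)\|$ by $\tfrac{C_p^4}{a_{\min}^2}\|v-u\|$. The only cosmetic difference is that you estimate the state difference first and the adjoint second, handling the backward time direction by the substitution $\tau = T-t$, whereas the paper treats the adjoint in detail and dispatches the state equation by ``analogous arguments.''
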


\begin{proof}
As before we refrain from writing $f,y,$ and $p$ explicitly in terms of the random state $\omega \in \Omega$. Recall Equation \eqref{eq:difference_of_gradients}
\[
\nabla f(v) - \nabla f(u) = \alpha(v - u) + p(v) - p(u),
\]
for a given $\omega \in \Omega$, and controls $u, v \in U$. To bound the difference in sample gradients, it therefore suffices to bound the difference in adjoint variables. Multiplying Equation \eqref{eq:difference_of_adjoint} by $p(v)-p(u)$, integrating over $D$ and $[0,T]$, and using Green's theorem for the diffusion term, as well as the chain rule for the time derivative yields
\begin{align}
&\langle y(v)-y(u), p(v)-p(u)\rangle = \langle \mathcal L^*(p(v)-p(u)), p(v)-p(u)\rangle \nonumber\\
=& -\int_0^T \int_D \frac{d}{dt}\|p(v)-p(u)\|^2 dx\; dt + \int_0^T \int_D a (\nabla p(v)-\nabla(p(u))^2 dx\; dt. \label{eq:cross_term}
\end{align}
The fundamental theorem of calculus and the terminal condition for the adjoint equation \eqref{eq:adjoint} together imply
\[
-\int_0^T \int_D \frac{d}{dt}\|p(x,t,v)-p(x,t,u)\|^2 dx\; dt = \int_D \|p(x,0,v)-p(x,0,u)\|^2 dx \geq 0.
\]
Moreover, the coercivity condition \eqref{eq:coercivity}, used in conjunction with the Poincar\'e inequality, implies
\[
\int_0^T \int_D a (\nabla p(v)-\nabla(p(u))^2 dx\; dt \geq a_{\min} \|\nabla p(v)-\nabla p(u)\|^2 \geq \frac{a_{\min}}{C_p^2} \|p(v)-p(u)\|^2,  
\]
where $C_p$ is the appropriate Poincar\'e constant. Equation \eqref{eq:cross_term} and the Cauchy-Schwartz inequality thus lead to
\[
\frac{a_{\min}}{C_p^2} \|p(v)-p(u)\|^2 \leq \langle y(v)-y(u), p(v)-p(u)\rangle \leq \|y(v)-y(u)\|\|p(v)-p(u)\|,
\]
and hence 
\begin{equation}
\|p(v)-p(u)\|\leq \frac{C_p^2}{a_{\min}} \|y(v)-y(u)\|.
\end{equation}
By applying analogous arguments to the state equation \eqref{eq:state}, we can similarly bound
\[
\|y(v)-y(u)\| \leq \frac{C_p^2}{a_{\min}} \|v-u\|,
\]
so that 
\[
\|p(v)-p(u)\| \leq \frac{C_p^4}{a_{\min}^2} \|v-u\|.
\]
Therefore 
\[
\|\nabla f(v)-\nabla f(u)\| \leq \|p(v)-p(u)\| + \alpha \|v-u\| \leq \frac{C_p^4}{a_{\min}^2} \|v-u\| + \alpha \|v-u\|, 
\]
yielding the bound \eqref{eq:gradient_lipschitz}.
\end{proof}

\subsection{Finite Variance}\label{subsection:finite_variance}
In addition to strong convexity and Lipschitz continuity, shown in Propositions \ref{prop:strong_convexity} and \ref{prop:gradient_lipschitz} respectively, the convergence proof of the AdaGrad algorithm requires a strengthened finite variance condition on the sample gradients. It is commonly enforced \cite{Nemirovski2009, Li2019a} by assuming there is a constant $\sigma^2>0$ so that  
\begin{equation}
\label{eq:finite_variance}
\E\left[\exp\left(\frac{\|\nabla f(u) - \nabla F(u)\|^2}{\sigma^2}\right)\right] \leq \exp(1), \qquad \text{for all } u\in U.
\end{equation}
Through the use of Jensen's inequality and conditional expectation, Inequality \eqref{eq:finite_variance} can be readily shown (see e.g. \cite{Li2019a}) to imply
\begin{equation}
\label{eq:finite_variance_max}
\E\left[\max_{1\leq k \leq j} \|\nabla f_k(u_k) - \nabla F(u_k)\|^2\right] \leq \sigma^2(1 + \ln(j)), \qquad j=1,2,\hdots,
\end{equation}
in addition to the standard variance bound
\begin{equation}
\label{eq:finite_variance_classic}
\E\left[\|\nabla f_j(u_j) - \nabla F(u_j)\|^2\right] \leq \sigma^2, \qquad j=1,2,\hdots.
\end{equation}
Finite variance assumptions, such as Inequality \eqref{eq:finite_variance}, are commonly made in convergence analyses of stochastic optimization methods and are, strictly speaking, only required to hold for the iterations $u_j$. Nevertheless, they are generally not verifiable independently of the iteration.   
The following proposition helps frame requirement \eqref{eq:finite_variance} in terms of statistical paramaters underlying the control problem \eqref{eq:optimal_control}.

\begin{prop}\label{prop:bnd_grad_by_pars}
For any $\omega \in \Omega $ and any fixed $u \in U$,
\begin{multline}
\label{eq:gradient_par_bnd}
\|\nabla f(u,\omega)\| \leq \left(\alpha + \frac{C_p^4}{a_{\min}^2}\right) \|u\| + \frac{C_p^4}{a_{\min}^2}\|g(\omega)\| + \frac{C_p^4}{2a_{\min}^2}\|y_0(\omega)\| + \frac{C_p^2}{a_{\min}}\|y_d(\omega)\|.
\end{multline}
\end{prop}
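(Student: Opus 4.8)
The plan is to peel off the two contributions to the sample gradient and bound each by reusing the energy estimates developed in Proposition \ref{prop:gradient_lipschitz}, the only genuinely new ingredient being that the state now carries the nonzero initial datum $y_0$. Starting from the gradient formula \eqref{eq:gradient}, the triangle inequality gives $\|\nabla f(u)\| \le \|p\| + \alpha\|u\|$, so it suffices to bound the adjoint state $\|p\|$. (As in the earlier proofs I suppress the dependence on the fixed realization $\omega$.)

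For the adjoint bound I would repeat the computation leading to \eqref{eq:cross_term}, but applied to $p$ itself rather than to a difference of adjoints: testing the adjoint equation \eqref{eq:adjoint} with $p$, integrating over $D\times[0,T]$, using the terminal condition $p(\cdot,T)=0$ to discard the nonnegative time-derivative term, and applying coercivity \eqref{eq:coercivity} together with the Poincar\'e inequality to the diffusion term yields
\[
\frac{a_{\min}}{C_p^2}\|p\|^2 \le \langle y - y_d,\, p\rangle \le \|y-y_d\|\,\|p\|,
\]
hence $\|p\| \le \frac{C_p^2}{a_{\min}}\|y-y_d\| \le \frac{C_p^2}{a_{\min}}\big(\|y\| + \|y_d\|\big)$. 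This already supplies the $\frac{C_p^2}{a_{\min}}\|y_d\|$ term in \eqref{eq:gradient_par_bnd} and reduces the problem to estimating $\|y\|$ in terms of $g$, $u$, and $y_0$.

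The state estimate is where the argument departs from Proposition \ref{prop:gradient_lipschitz}: there the difference $y(v)-y(u)$ had vanishing initial data, whereas here testing \eqref{eq:state} with $y$ produces, via the fundamental theorem of calculus, a boundary contribution $-\tfrac12\|y_0\|^2$ that does not cancel. Concretely, testing $\mathcal L y = g+u$ with $y$ gives
\[
\frac{1}{2}\|y(\cdot,T)\|^2 - \frac{1}{2}\|y_0\|^2 + \int_0^T\!\!\int_D a|\nabla y|^2\,dx\,dt = \langle g+u,\, y\rangle,
\]
and after discarding $\tfrac12\|y(\cdot,T)\|^2\ge 0$, applying coercivity and Poincar\'e to the diffusion term, and using Cauchy--Schwarz on the right, one is left with a quadratic inequality for $\|y\|$,
\[
\frac{a_{\min}}{C_p^2}\|y\|^2 \le \big(\|g\|+\|u\|\big)\|y\| + \frac{1}{2}\|y_0\|^2.
\]
I would resolve this either by Young's inequality or, more transparently, by splitting $y=y_1+y_2$ according to linearity, with $y_1$ driven by $g+u$ with zero initial data and $y_2$ driven by $y_0$ with zero forcing, and bounding each summand separately; the first reproduces the $\frac{C_p^2}{a_{\min}}(\|g\|+\|u\|)$ factor exactly as in the Lipschitz proof. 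Feeding the resulting state bound into $\|p\| \le \frac{C_p^2}{a_{\min}}(\|y\|+\|y_d\|)$ and adding $\alpha\|u\|$ then collects the coefficients $\big(\alpha+\frac{C_p^4}{a_{\min}^2}\big)\|u\|$, $\frac{C_p^4}{a_{\min}^2}\|g\|$, and $\frac{C_p^2}{a_{\min}}\|y_d\|$.

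The main obstacle is the initial-condition term. Because $y_0$ enters the energy identity quadratically, the delicate point is to carry out the $y_2$-estimate so that the $y_0$-contribution appears linearly and with the precise coefficient $\frac{C_p^4}{2a_{\min}^2}$ recorded in \eqref{eq:gradient_par_bnd}; tracking this constant correctly is the crux of the proof, and is the one place where the present setting is strictly harder than the difference estimates of Proposition \ref{prop:gradient_lipschitz}. A secondary technical point is that $g$ lives in $H^{-1}$, so the pairing $\langle g,y\rangle$ should be read as the $H^{-1}$--$H^1_0$ duality, with the Poincar\'e factor relating the $H^1_0$ seminorm to the coercive diffusion term accounted for consistently.
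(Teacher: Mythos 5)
Your argument tracks the paper's own proof almost step for step up to and including the quadratic inequality $\frac{a_{\min}}{C_p^2}\|y\|^2 \le (\|g\|+\|u\|)\|y\| + \frac12\|y_0\|^2$: same triangle inequality on \eqref{eq:gradient}, same adjoint energy estimate giving $\|p\|\le \frac{C_p^2}{a_{\min}}(\|y\|+\|y_d\|)$, same testing of the state equation with $y$. But at exactly the point you yourself call ``the crux'' --- converting the quadratic $y_0$-contribution into a linear term with the coefficient $\frac{C_p^4}{2a_{\min}^2}$ of \eqref{eq:gradient_par_bnd} --- you stop and only name two candidate strategies, and neither of them, if carried out, delivers the stated constant. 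Solving your quadratic inequality (equivalently, using Young's inequality) gives $\|y\| \le \frac{C_p^2}{a_{\min}}(\|g\|+\|u\|) + \frac{C_p}{\sqrt{2a_{\min}}}\|y_0\|$; the linear splitting $y=y_1+y_2$ gives the same: testing the homogeneous equation with $y_2$ yields $a_{\min}\|\nabla y_2\|^2 \le \frac12\|y_0\|_{L^2(D)}^2$, hence $\|y_2\| \le \frac{C_p}{\sqrt{2a_{\min}}}\|y_0\|_{L^2(D)}$ by Poincar\'e. Either way the $y_0$-term enters the final bound with coefficient $\frac{C_p^3}{\sqrt{2}\,a_{\min}^{3/2}}$ --- a square-root scaling --- not $\frac{C_p^4}{2a_{\min}^2}$. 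So what you have sketched proves a correct estimate of the same shape but with a different constant; it does not prove the proposition as stated, and no amount of careful bookkeeping along the routes you propose will close that discrepancy.

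For comparison, the step the paper actually takes here is the one genuinely nonroutine move of its proof: it asserts $\|y_0\|_{L^2(D)} = \bigl(\int_D y_0^2\,dx\bigr)^{1/2} \le \bigl(\int_0^T\!\int_D y^2\,dx\,dt\bigr)^{1/2} = \|y\|$, so that $\frac12\|y_0\|^2 \le \frac12\|y_0\|\,\|y\|$. This makes the right-hand side of the state energy inequality linear in $\|y\|$; dividing through by $\|y\|$ gives $\|y\| \le \frac{C_p^2}{a_{\min}}\bigl(\|g\|+\|u\|+\frac12\|y_0\|\bigr)$, and substituting into the adjoint bound reproduces exactly the coefficients of \eqref{eq:gradient_par_bnd}. (One can object that this auxiliary inequality is itself suspect --- it is dimensionally inconsistent and fails for small $T$, where $\int_0^T\!\int_D y^2\,dx\,dt \approx T\int_D y_0^2\,dx$ --- but it is the device by which the paper reaches the stated constant.) The gap in your proposal is therefore concrete: the single step you defer is precisely the step that distinguishes this proposition from the difference estimates of Proposition \ref{prop:gradient_lipschitz}, and the fallback arguments you offer in its place land on a provably different coefficient.
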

\begin{proof}
Recall from Equation \eqref{eq:gradient} that 
\[
\|\nabla f(u,\omega)\| = \|\alpha u + p(u,\omega)\|\leq \alpha \|u\|+\|p(u,\omega)\|,
\]
for any $\omega \in \Omega$, where $p=p(u,\omega)$ satisfies the adjoint system given by Equation \eqref{eq:adjoint}. To bound $\|p\|$, we multiply on both sides of Equation \eqref{eq:adjoint} by $p$ and integrate over $[0,T]$ and $D$, leading to 
\begin{equation}
\label{eq:p_integrated}
-\int_0^T\int_D p_t p \; dx\;dt - \int_0^T \int_D \nabla \cdot (a \nabla p) p\; dx\;dt = \int_0^T \int_D (y - y_d)p \;dx\;dt.
\end{equation}

Using the chain rule, the fundamental theorem of calculus, and the terminal condition for $p$, we obtain
\begin{equation}
\label{eq:p_time_bnd}
-\int_0^T\int_D p_t p \; dx\;dt = -\int_0^T \int_D \frac{1}{2} \frac{d}{dt}p^2 \;dx \;dt = -\frac{1}{2}\int_D p(x,0)^2 dx\;dt  \leq 0.
\end{equation}
Moreover, by Green's theorem
\begin{equation}
\label{eq:p_diffusion_bnd}
- \int_0^T \int_D \nabla \cdot (a \nabla p) p\; dx\;dt = \int_0^T \int_D a \nabla p\cdot \nabla p \;dx\;dt \geq a_{\min} \|\nabla p\|^2.
\end{equation}
Substituting Equations \eqref{eq:p_time_bnd} and \eqref{eq:p_diffusion_bnd} into \eqref{eq:p_integrated}, rearranging terms, and using the Poincar\'e inequality then results in 
\begin{equation}
\label{eq:p_in_terms_of_y}
\|p\|\leq \frac{C_p^2}{a_{\min}} \|y-y_d\| \leq \frac{C_p^2}{a_{\min}}\left(\|y\|+ \|y_d\|\right).
\end{equation}
To bound $\|y\|$, we multiply the state equation \eqref{eq:state} by $y$ on both sides and use analogous arguments to those above to obtain
\[
a_{\min}\|\nabla y\|^2 \leq (\|g\| + \|u\|)\|y\| + \frac{1}{2}\|y_0\|^2.
\] 
Since 
\[
\|y_0\| = \sqrt{\int_D y_0(x)^2 \;dx} \leq \sqrt{\int_0^T \int_D y(x,t)^2 dx\;dt} = \|y\|,
\]
we have, by virtue of Poincar\'e's inequality, that 
\[
\|y\| \leq \frac{C_p^2}{a_{\min}}\left(\|g\| + \|u\| + \frac{1}{2}\|y_0\|\right).
\]
Substituting this inequality into \eqref{eq:p_in_terms_of_y} establishes the estimate in \eqref{eq:gradient_par_bnd}. 
\end{proof}

Note that the upper bound \eqref{eq:gradient_par_bnd} involves the norm $\|u\|$ of the control. In fact, a derivation similar to that used to establish Proposition \ref{prop:bnd_grad_by_pars} leads to an upper bound for $\|\nabla f(u,\omega)-\nabla F(u)\|$, for $\omega \in \Omega$, that also depends on $\|u\|$, as long as the diffusivity $a$ is stochastic. We therefore find it necessary to restrict the admissible set $U_{ad}$ to be bounded, i.e. $u_{\max}<\infty$. To ensure that AdaGrad iterations remain feasible, we make the following assumption. 
\begin{assumption}
\label{ass:iterates_remain_bounded}
Assume that the samples and step sizes in the AdaGrad iteration are chosen so that $u_j \in U_{ad}$ for $j=1,2,\hdots$. 
\end{assumption}
This condition can be enforced for example by rejecting steps that lie outside $U_{ad}$. To be sure, $u_{\max}$ may be chosen large enough to make such interventions unlikely as well as to ensure that the optimal control $u^*$ lies in the interior of $U_{ad}$. In fact, by the optimality of $u^*$ and form of the risk function $F(u)$, we have 
\[
\frac{\alpha}{2} \|u^*\|^2 \leq F(u^*) \leq F(u_0),
\]
so that $\|u^*\|<u_{\max}$ whenever $u_{\max} > \sqrt{\frac{2}{\alpha}F(u_0)}$ for any initial condition $u_0$.

We will use the estimates of Proposition \ref{prop:bnd_grad_by_pars} to establish conditions on the uncertain parameters that ensure the strengthened bounded variance condition \eqref{eq:finite_variance}. To simplify the bound \eqref{eq:gradient_par_bnd}, we estimate $\|u\|$ by $u_{\max}$ and bound the constants by their upper bound $K = \max\left\{\alpha + \frac{C_p^4}{a_{\min}^2}, \frac{C_p^2}{a_{\min}}\right\}$, yielding
\begin{equation}
\label{eq:gradient_par_bnd_simple}
\|\nabla f\| \leq K(u_{\max}+ \|g\| + \|y_d\| + \|y_0\|).
\end{equation} 
For any $\omega \in \Omega$, repeated use of Jensen's inequality gives
\begin{multline*}
\E\left[\exp\left(\frac{\|\nabla f(u,\omega)-\nabla F(u)\|^2}{\sigma^2}\right)\right] \\ \leq \E\left[\exp\left(\frac{2}{\sigma^2}\left(\|\nabla f(u,\omega)\|^2+\E\left[\|\nabla f(u)\|^2\right]\right)\right)\right],
\end{multline*}
where \eqref{eq:gradient_par_bnd_simple} and Assumption \eqref{ass:well_posedness} imply
\begin{align*}
\E\left[\|\nabla f(u)\|^2\right] \leq 4 K^2 \E\left[u_{\max}^2+\|g\|^2 +\|y_d\|^2 + \|y_0\|^2\right] <\infty.
\end{align*}
In light of these bounds, we can now formulate an assumption on the system parameters $g$, $y_d$, and $y_0$ that, together with Assumptions \ref{ass:well_posedness} and \ref{ass:iterates_remain_bounded}, implies the strengthened bounded variance condition \eqref{eq:finite_variance}.
\begin{assumption}
\label{ass:finite_variance}
Assume that there exists a $\sigma^2>0$, so that 
\begin{multline}
\E\left[\exp\left(\frac{8K^2}{\sigma^2}\left(\|g\|^2 + \|y_d\|^2 + \|y_0\|^2 +\E\left[\|g\|^2 +\|y_d\|^2 + \|y_0\|^2\right]\right)\right)\right] \\
\leq \exp\left(1-\frac{16 K^2}{\sigma^2}u_{\max}^2\right).
\end{multline}
\end{assumption}

With the cost function $f$ satisfying Propositions 2.1-2.4 and from \textbf{Theorem 3} in  \cite{Li2019a} we can state that:
\begin{theorem}\label{thm:convergence}
For stepsizes given by \eqref{eq:adagrad_step size}, where $\eta, b_0 >0$ and $4 \eta M < \sqrt{b_0}$, and under Assumptions \ref{ass:well_posedness}--\ref{ass:finite_variance}, the iterates of the AdaGrad algorithm satisfy the following bound
\begin{equation}
\label{convg_bound}
\mathbb{E}\left[\sqrt{F(\bar{u}_n)-F(u^*)}\right] \leq \frac{1}{\sqrt{n}} \max \left(\gamma \sqrt{M} , (b_0 + n \sigma^2)^\frac{1}{4} \sqrt{\gamma}\right),
\end{equation}
where 
\[
\bar{u}_n = \frac{1}{n}\sum_{j=1}^n u_j, \qquad \text{ and } \qquad \gamma = O\left(\frac{1+ \eta^2 \ln n}{\eta (1-\frac{4 \eta M}{\sqrt{b_0}})} \right)
\]
\end{theorem}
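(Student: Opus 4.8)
The plan is to obtain the result as a direct specialization of \textbf{Theorem 3} in \cite{Li2019a} to the separable Hilbert space $U = L^2(D\times[0,T])$. The proof therefore naturally splits into two tasks: first, verifying that the risk function $F$ and its sample gradients meet every hypothesis of that theorem; and second, confirming that each step of the finite-dimensional argument in \cite{Li2019a} survives the passage to infinite dimensions. Since the present statement is phrased as an application of the cited theorem, the genuine content lies entirely in these two verifications rather than in reproving the convergence estimate from scratch.

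For the first task I would check the hypotheses one by one against the machinery already assembled. Strong convexity with modulus $\alpha$ is exactly Proposition \ref{prop:strong_convexity}, and Lipschitz continuity of the sample gradient with constant $M = \alpha + C_p^4/a_{\min}^2$ is Proposition \ref{prop:gradient_lipschitz}. Unbiasedness, $\mathbb{E}[\nabla f_j(u_j)] = \nabla F(u_j)$, together with i.i.d. sampling, is built into the iteration \eqref{eq:sg_iteration}. The strengthened finite-variance condition \eqref{eq:finite_variance}, with its consequences \eqref{eq:finite_variance_max} and \eqref{eq:finite_variance_classic}, is guaranteed along the iterates by Assumption \ref{ass:finite_variance}, which uses the parameter bound of Proposition \ref{prop:bnd_grad_by_pars} and the feasibility of the iterates (Assumption \ref{ass:iterates_remain_bounded}) to replace the otherwise $\|u\|$-dependent gradient norm by $u_{\max}$. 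Finally, the step-size restriction $4\eta M < \sqrt{b_0}$ matches the admissibility condition on the initial step size required in \cite{Li2019a}, and the definitions of $\bar u_n$ and $\gamma$ are inherited verbatim, so the target bound \eqref{convg_bound} is the cited theorem's conclusion transcribed into our notation.

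The bulk of the work lies in the second task: justifying that the convergence argument is coordinate-free. I would run through the proof of \cite{Li2019a} and observe that its essential ingredients -- the one-step expansion of $\|u_{j+1}-u^*\|^2$ induced by the gradient step, the descent and strong-convexity inequalities, Cauchy--Schwarz, and the telescoping identity $\sum_{k=1}^{j}\|\nabla f_k(u_k)\|^2 = b_{j+1}^2 - b_0^2$ underlying the scalar adaptive rule \eqref{eq:adagrad_step size} -- are all expressed purely through the inner product $\langle\cdot,\cdot\rangle$ and norm $\|\cdot\|$ of $U$, and hence transfer unchanged to any separable Hilbert space. The expectations are Bochner integrals of $U$-valued random variables, which are well-defined and obey the same Jensen and conditioning rules used to pass from \eqref{eq:finite_variance} to \eqref{eq:finite_variance_max}. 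No step invokes a basis, compactness of bounded sets, or finiteness of the dimension; the only place where finite dimension could enter \cite{Li2019a} is in a componentwise variant of the step size, which we have replaced by the scalar rule \eqref{eq:adagrad_step size}, so that branch of their argument is simply bypassed.

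The main obstacle I anticipate is precisely this reconciliation of conventions. The reference \cite{Li2019a} may phrase its variance hypothesis, its initial step-size bound, or its application of the $\max$-type estimate \eqref{eq:finite_variance_max} in a form tailored to $\mathbb{R}^n$, so some care is needed to match their constants to ours and to confirm that the interpolation between the $1/n$ and $1/\sqrt{n}$ regimes -- reflected in the $\max$ of $\gamma\sqrt{M}$ and $(b_0+n\sigma^2)^{1/4}\sqrt{\gamma}$ in \eqref{convg_bound} -- is preserved intact. Once the dictionary between the two settings is fixed and the coordinate-free nature of each inequality has been checked, the bound \eqref{convg_bound} follows at once from the cited theorem.
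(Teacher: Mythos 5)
Your proposal is correct and takes essentially the same route as the paper: the theorem is obtained as a specialization of Theorem 3 in \cite{Li2019a}, with its hypotheses supplied by Propositions \ref{prop:strong_convexity}, \ref{prop:gradient_lipschitz}, and \ref{prop:bnd_grad_by_pars} together with Assumptions \ref{ass:well_posedness}--\ref{ass:finite_variance}. The only difference is presentational: where you argue abstractly that the finite-dimensional argument is coordinate-free, the paper makes this explicit by sketching the key inequalities (Jensen/H\"older, the bound on $\E\left[1/\eta_n\right]$, the telescoping descent estimate, and the solution of the resulting inequality via Lemmas 5 and 6 of \cite{Li2019a}) directly in the Hilbert-space notation.
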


\begin{remark}\label{rem:markov_inequality}
The convergence bound ensures, via Markov's inequality, that the optimality gap $F(\bar u_n)-F(u^*)$ will satisfy
\[
F(\bar{u}_n)-F(u^*) \leq \frac{1}{\delta}\left(\frac{1}{n}\max\left\{\gamma^2 M, (b_0+\sigma^2 n)^{\frac{1}{2}}\gamma\right\}\right)
\]
with a probability of at least $1-\delta$. This bound highlights the effect of the variance on the method's convergence rate, reducing it from $O\left(\frac{1}{\sqrt{n}}\right)$ to $O\left(\frac{1}{n}\right)$.
\end{remark}

\begin{remark}
Note that, while the step size rule $\{\eta_j\}_{j=1}^\infty$ does not depend on the variance parameter $\sigma^2$, it is constrained by the Lipschitz constant $M$. In \cite{ward19a} the authors provide a proof for a weaker form of convergence without any such dependence. 
\end{remark}

\begin{proof}
For completeness, we give a brief outline of the proof. A more detailed discussion can be found in \cite{Li2019a}.
Let $\delta F_j = F(u_j) - F(u^*)\geq 0$ for $j=0,1,2,\hdots$ be the optimality gap at the jth iteration. The convexity of $F$ implies, by virtue of Jensen's inequality, that
\begin{align*}
\E\left[\sqrt{F(\bar u_n)-F(u^*)}\right] &= \E\left[\sqrt{F\left(\frac{1}{n}\sum_{j=1}^n u_j\right)-F(u^*)}\;\right]\\
&\leq \E\left[\sqrt{\frac{1}{n}\sum_{j=1}^n F(u_j) - F(u^*)}\;\right] = \frac{1}{\sqrt{n}} \E\left[\sqrt{\sum_{j=1}^n\delta F_j}\;\right]
\end{align*}
Moreover, the fact that $\eta_0\geq \eta_1 \hdots \geq \eta_n > 0$, together with H\"older's inequality, justify
\begin{equation}
\label{eq:bnd_df}
\E\left[\sqrt{\sum_{j=1}^n \delta F_j}\;\right] \leq \E\left[\sqrt{\frac{1}{\eta_n}\sum_{j=1}^n \eta_j\delta F_j}\;\right] \leq \left(\E\left[\frac{1}{\eta_n}\right]\right)^{\frac{1}{2}}\left(\E\left[\sum_{j=1}^n \eta_j \delta F_j\right]\right)^{\frac{1}{2}}.
\end{equation} 
To complete the proof, it now remains to bound $\E\left[\frac{1}{\eta_n}\right]$ and $\E\left[\sum_{j=1}^n \eta_j \delta F_j\right]$.
From the definition of the step size, we have 
\begin{align*}
\frac{1}{\eta_n} &= \frac{1}{\eta}\sqrt{b_0^2 + \sum_{j=1}^{n-1} \|\nabla f_j(u_j)\|^2} \\
&\leq \frac{1}{\eta}\sqrt{b_0^2 + 2\sum_{j=1}^{n-1}\left( \|\nabla f_j(u_j)-\nabla F(u_j)\|^2 + \|\nabla F(u_j)\|^2\right)}.
\end{align*}
By Lipschitz continuity, $\|\nabla F(u_j)\|^2 \leq 2 M \delta F_j$, while the finite variance condition \eqref{eq:finite_variance} implies, via \eqref{eq:finite_variance_classic}, that $\E\left[\sum_{j=1}^{n-1} \|\nabla f_j(u_j)-\nabla F(u_j)\|^2\right]\leq (n-1)\sigma^2$. Hence, since $\sqrt{a+b} \leq \sqrt{a}+\sqrt{b}$ for $a,b\geq 0$, 
\begin{equation}
\label{eq:bnd_eta_recip}
\E\left[\frac{1}{\eta_n}\right]\leq \frac{1}{\eta}\left(\sqrt{b_0^2 + 2(n-1)\sigma^2} + 2\sqrt{M}\E\left[\sqrt{\sum_{j=1}^n \delta F_j}\;\right]\right)
\end{equation}
The bound on $\E\left[\sum_{j=1}^n \eta_j \delta F_j\right]$ can be established, based on the relation 
\begin{align*}
\eta_j \langle\nabla f_j, u_j-u^*\rangle &= \frac{1}{2}\|u_{j+1}-u^*\|^2 - \frac{1}{2}\|u_j-u^*\|^2 + \eta_j^2 \|\nabla f_j\|^2,
\end{align*}
which can be readily derived using the AdaGrad update $u_{j+1}=u_j-\eta_j \nabla f_j$.
Let $\E_j$ denote the conditional expectation with respect to $\xi_j$, given $\xi_1,\hdots,\xi_{j-1}$. Strong convexity and the fact that $u_j$ and $\eta_j$ are independent of $\xi_j$ now imply
\begin{align*}
\E_{j}\left[\eta_j\delta F_j\right] &\leq \eta_j\langle \nabla F_j(u_j), u_j-u^*\rangle = \E_{j}\left[\langle \eta_j \nabla f_j, u_j-u^*\rangle\right] \\
&=\E_j\left[\frac{1}{2}\|u_{j+1}-u^*\|^2 - \frac{1}{2}\|u_j-u^*\|^2 + \eta_j^2 \|\nabla f_j\|^2\right],
\end{align*}
which, by summing over $j=1,\hdots, n$, taking total expectation and recognizing a telescoping sum, yields
\begin{align}
\E\left[\sum_{j=1}^n\eta_j\delta F_j\right] &= \E\left[\sum_{j=1}^n\E_j\left[\eta_j\delta F_j\right]\right] \nonumber \\
&\leq \E\left[\sum_{j=1}^n \left(\frac{1}{2}\|u_{j}-u^*\|^2 - \frac{1}{2}\|u_{j+1}-u^*\|^2 + \frac{1}{2} \eta_j^2\|\nabla f_j\|^2\right)\right]\nonumber\\
&\leq \frac{1}{2}\|u_{1}-u^*\|^2 + \frac{1}{2}\E\left[\sum_{j=1}^n\eta_j^2 \|\nabla f_j\|^2\right].\label{eq:bnd_etadf_etagradf}
\end{align}
An upper bound $\E\left[\sum_{j=1}^n\eta_j^2 \|\nabla f_j\|^2\right]$ can be obtained from the strong finite variance condition \eqref{eq:finite_variance}, as well as Lipschitz continuity, giving
\begin{align*}
\frac{1}{2}\E\left[\sum_{j=1}^n\eta_j^2 \|\nabla f_j\|^2\right]
\leq &\frac{1}{1-\frac{4\eta M}{\sqrt{b_0}}}\frac{2 \eta^2}{b_0}(1+\ln n)\sigma^2 
\\ 
&+ \frac{\eta^2}{1-\frac{4\eta M}{\sqrt{b_0}}}  \ln \left( \sqrt{b_0 +2n \sigma^2} + 2 \sqrt{M} \mathbb{E}\left[\sqrt{\sum_{j=1}^n \delta F_j}\;\right]\right).
\end{align*}
Substituing this inequality into \eqref{eq:bnd_etadf_etagradf} and then inequalities \eqref{eq:bnd_etadf_etagradf} and \eqref{eq:bnd_eta_recip} into \eqref{eq:bnd_df}, one obtains an inequality in $\mathbb{E}\left[\sqrt{\sum_{j=1}^n \delta F_j}\;\right]$, which can be solved (see \cite{Li2019a}, Lemmas 5 and 6), yielding the result.
\end{proof}

\section{Numerical Experiments}\label{section:numerical_experiments}

In this section we perform two numerical experiments to illustrate and showcase the properties of the AdaGrad method for the parabolic optimal control problem \eqref{eq:optimal_control}. Example \ref{example:1} explores the convergence behavior of the algorithm and compares it with that of the traditional SGD method, focusing specifically on the effects of step size and convexity. Example \ref{example:2} applies the method to a simplified thermal regulator problem aimed at maintaining a safe overall temperature      

\begin{example} \label{example:1}
We first consider the parabolic problem given by Equation \eqref{eq:state} with spatial domain $[0,1]$ and terminal time $T=0.2$. Let the initial condition be given by $y_0(x)=x(1-x)$, the forcing term be $g(x)=0$, and the diffusion coefficient be the lognormal random field,
\[
a(x,\omega) = a_{\min} + \exp(\tilde{a}(x,\omega)), \qquad x\in [0,1], \omega \in \Omega,
\]  
where $a_{\min}=0.1$ and $\tilde a(x,\omega)$ is a zero-mean Gaussian random field with a Gaussian covariance kernel $k(x_1,x_2) = \sigma^2\exp\left(-\frac{|x_1-x_2|^2}{2l^2}\right)$ with variance $\sigma^2>0$ and correlation length $l>0$. In our experiments, the field is approximated by the truncated Karhunen-Lo\`eve expansion with 40 terms. The optimal control problem seeks to steer the state $y$ to the zero state $y_d(x,t)=0$ through distributed forcing with a regularization parameter $\alpha=0.1$. Both the state and adjoint equations are approximated by piecewise linear finite elements with 50 sub-intervals in space, and the implicit Euler timestepping scheme with 100 sub-intervals in time.  

To verify that the AdaGrad method outlined by Algorithm \ref{alg:adagrad} converges to an optimal control in mean square, we plot the sampled cost function $f(u_j,\omega)$ as well as its gradient $\nabla f(u_j,\omega)$ against the iteration count $j=1,\hdots 50$ in Figure \ref{fig:ex01_convergence}. We use an initial guess of $u_0(x,t)=2$ and optimization parameters $b_0=0.1$ and $\eta=1$. As is usual in stochastic optimization, the cost initially decreases  much variation during its transient phase (roughly 10 iteration steps here), after which it settles down into a stationary phase. Moreover, the linear decrease in $f$ in the log-log scale during the transient phase implies a convergence rate of $O(\frac{1}{j})$. In light of Remark \ref{rem:markov_inequality}, this is to be expected due to the small variance, as shown in Figure \ref{fig:ex01_state_variance}.

\begin{figure}[ht!]
\centering
\begin{subfigure}[t]{0.48\textwidth}
\centering
\includegraphics[width=\textwidth]{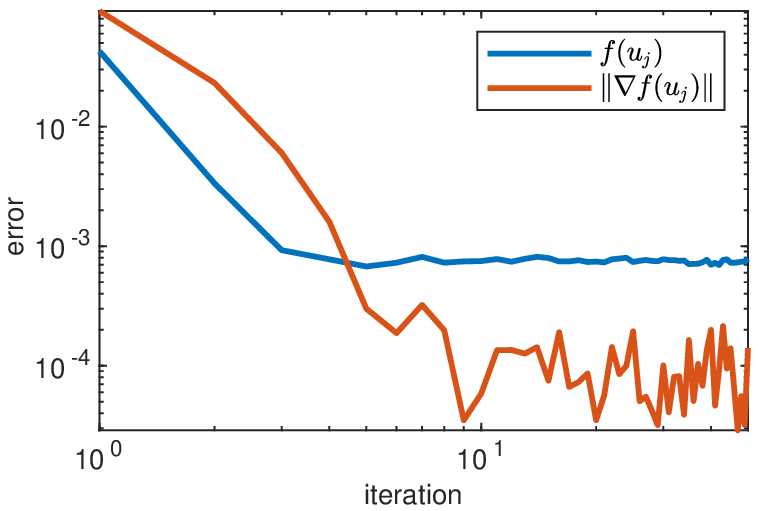}
\caption{Convergence of the cost and gradient norm on a log-log scale.}
\end{subfigure}
\hfill
\begin{subfigure}[t]{0.48\textwidth}
\centering
\includegraphics[width=\textwidth]{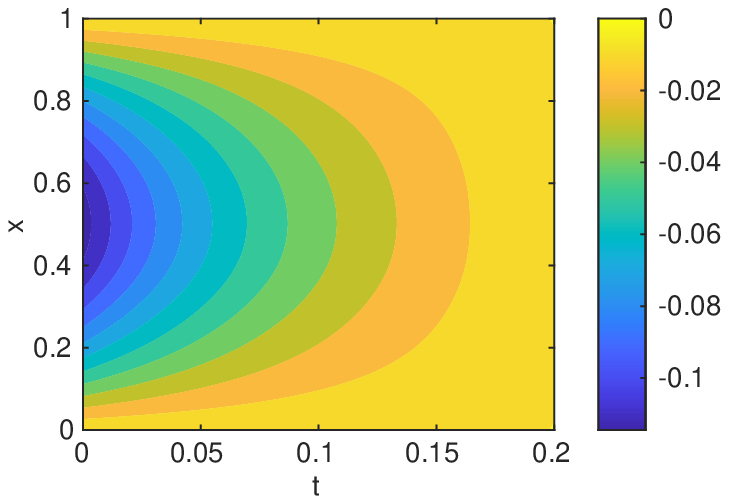}
\caption{Optimal control $u$}
\end{subfigure}
\caption{Convergence of the AdaGrad Method}
\label{fig:ex01_convergence}
\end{figure}

In Figure \ref{fig:ex01_statistics} we plot the sample mean and variance of the state $y$ under the optimal control $u$ to verify that the desired state $y_d(x,t)=0$ is tracked. 

\begin{figure}[ht!]
\centering
\begin{subfigure}[b]{0.48\textwidth}
\centering
\includegraphics[width=\textwidth]{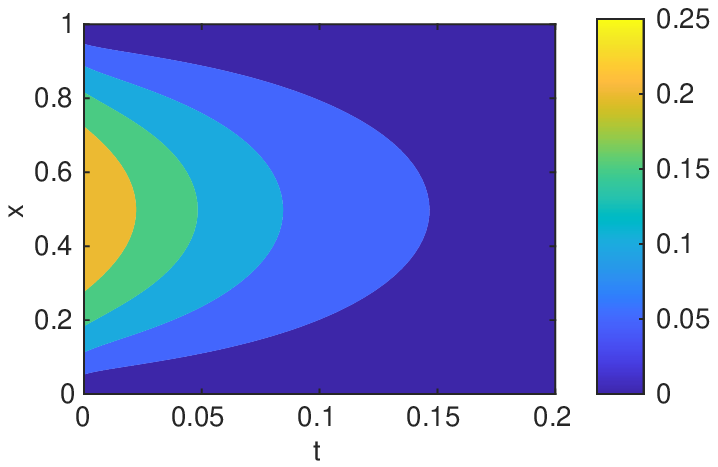}
\caption{The sample mean of the optimal state $y$.}
\end{subfigure}
\begin{subfigure}[b]{0.48\textwidth}
\centering
\includegraphics[width=\textwidth]{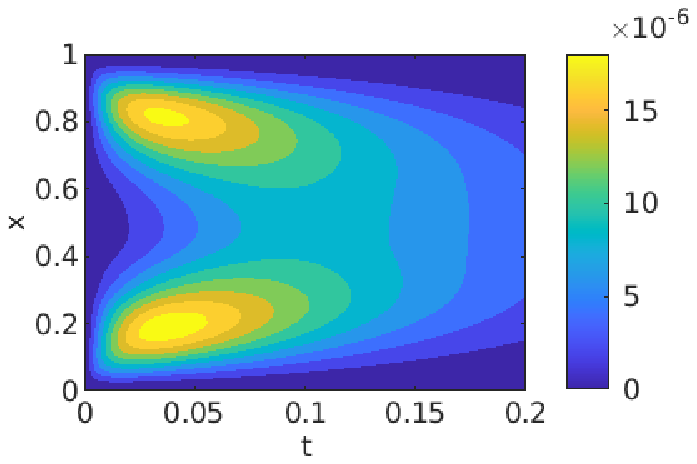}
\caption{The sample variance of the optimal state $y$.}
\label{fig:ex01_state_variance}
\end{subfigure}
\caption{Sample statistics of the optimal state using a sample size of 100.}
\label{fig:ex01_statistics}
\end{figure} 

Next we compare the AdaGrad algorithm's convergence behavior with that of the SGD method. Figure \ref{fig:ex01_step size} shows the effect of the step size on the algorithms' performances. The step size rule for the SGD method is chosen as $\eta_j = \frac{\eta_0}{j+1}$ for $j=0,1,...200$, while that of the AdaGrad algorithm was chosen according to Equation \eqref{eq:adagrad_step size}, with $b_0=1$ to ensure a fair comparison. Evidently, the step size affects the convergence behavior of both algorithms for this problem, with a decrease in $\eta_0$ leading to a deterioration in convergence. However, the AdaGrad method (in black) seems to be less adversely affected than the SGD method (in red). 

\begin{figure}[ht!]
\centering
\includegraphics[width=0.6\textwidth]{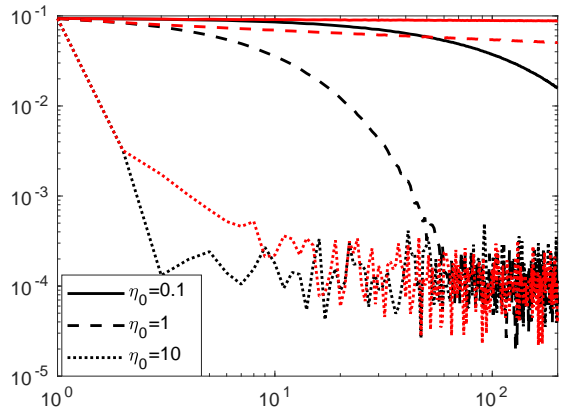}
\caption{Convergence of the gradient norm $\|\nabla f(u_j)\|$ for different initial stepsizes, using either the SGD (red) or AdaGrad (black) method.}
\label{fig:ex01_step size}
\end{figure}

Figure \ref{fig:ex01_convexity} shows how the regularization parameter $\alpha$, which determines the problem's strong convexity, plays a role in the algorithms' convergence behaviors. For both algorithms we set the step size parameters to ensure an initial step size of $\eta_0=10$ and vary the regularization parameter $\alpha$. The plot suggests that, while a reduction in the strong convexity of the problem reduces the convergence rates of both algorithms, the AdaGrad method (in black) is more robust to the loss in convexity than the SGD method (in red), whose convergence breaks down for $\alpha=0.01$.

\begin{figure}[ht!]
\centering
\includegraphics[width=0.6\textwidth]{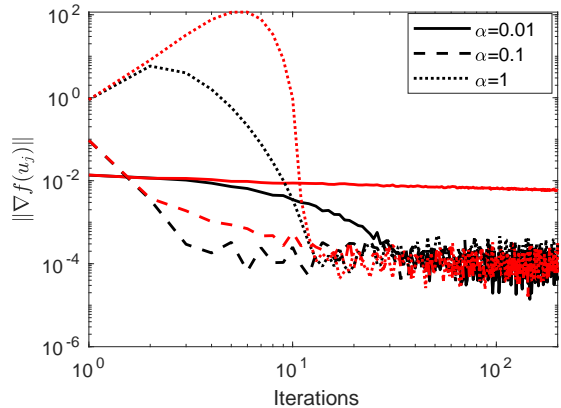}
\caption{Convergence of the gradient norm $\|\nabla f(u_j)\|$ for different regularization parameters in the first 200 iterations of either the SGD (red) or the AdaGrad (black) method with an initial step size of $\eta_0=10$.}
\label{fig:ex01_convexity}
\end{figure}
\end{example}

\begin{example}\label{example:2}
In this example we consider a simple two-dimensional model of a lithium cell whose forcing term represents the heat generation rate associated with a power load  resulting from two successive charge or discharge cycles.

The computational domain $D$, shown in Figure \ref{fig:ex02_domain}, represents a cross-section of a cylindrical cell with inner radius of 4mm, an outer radius of 32mm, and a length of 198mm. For simplicity, the surrounding temperature, as well as that in the axial center is assumed to be constant at $T_o=18^\circ \mathrm{C}$. 
\begin{figure}[ht!]
\centering
\includegraphics[scale=1]{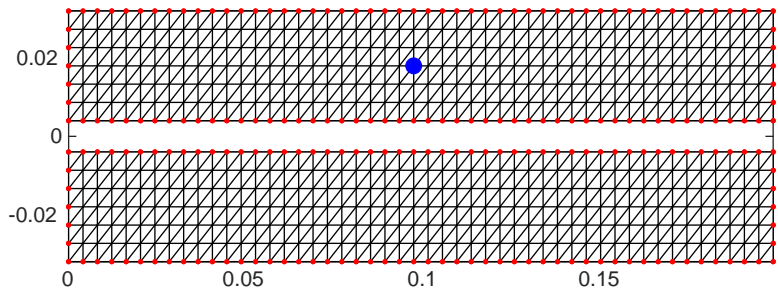}
\caption{Diagram of the computational mesh for the two-dimensional model of the lithium cell.}
\label{fig:ex02_domain}
\end{figure}

The material is treated as a homogeneous solid with density $\rho=2118\; \mathrm{kg} \ \mathrm{m}^{-3}$,  specific heat $c_p=765\; \mathrm{J}\; \mathrm{kg}^{-1}\; \mathrm{K}^{-1}$, and anisotropic thermal conductivities in the horizontal and vertical directions given by $k_{x_1}=66\; \mathrm{W\; m\; K}^{-1}$ and $k_{x_2}=0.66\; \mathrm{W\; m\; K}^{-1}$ respectively (see \cite{Richardson2016}). The equation governing the temperature evolution inside the battery is therefore given by

\begin{equation}\label{eq:lithium_cell_heat}
\begin{cases}
\rho c_p \frac{\partial y}{\partial t} - k_{x_1}\frac{\partial y}{\partial x_1} - k_{x_2}\frac{\partial y}{\partial x_1} = g + u, & x \in D, t>0 \\
y(x,0) = T_o, & x\in D   \\
y(x,t) = T_o, & x\in \partial D, t>0.
\end{cases}
\end{equation}

Heat is generated uniformly over the entire domain, but the timing, duration, and intensity of the power load are assumed to be independently and uniformly distributed uncertain quantities. Specifically, we take the onset time of the first charge pulse to range between 40 and 60 min and that of the second pulse between 200  and 220 min. The duration of each pulse varies between 30 and 60 min, and their intensity lies between 200 and 400 $\mathrm{W}\; \mathrm{m}^{-3}$. Figure \ref{fig:sample_heat_generation_rate} shows 20 sample realizations of the resulting heat generation rate, while Figure \ref{fig:ex02_sample_temperatures} shows the associated uncontrolled temperature profiles at the fixed spatial location $\tilde x = (0.097,0.098)$, indicated on Figure \ref{fig:ex02_domain} by a blue dot.

\begin{figure}[ht!]
\centering
\begin{subfigure}[t]{0.48\textwidth}
\centering
\includegraphics[width=\textwidth]{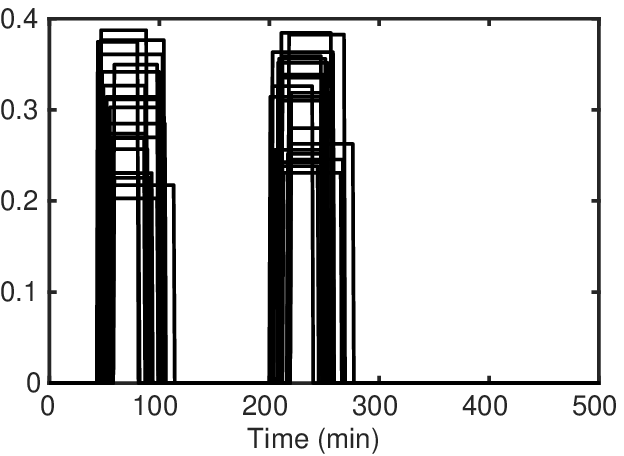}
\caption{Samples of the heat generation rate, scaled by $\rho$, $c_p$.}
\label{fig:sample_heat_generation_rate}
\end{subfigure}
\hfill
\begin{subfigure}[t]{0.48\textwidth}
\centering
\includegraphics[width=\textwidth]{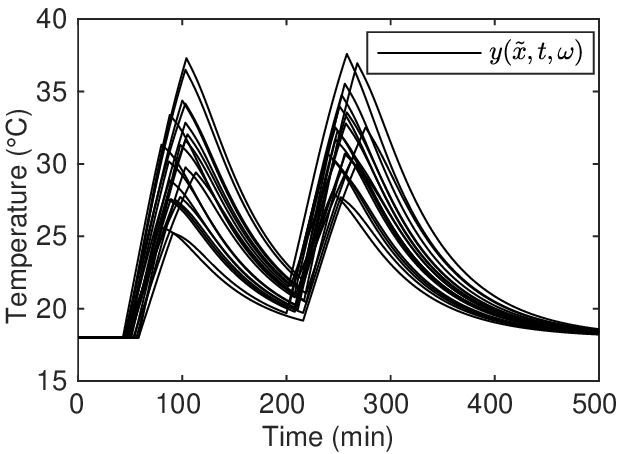}
\caption{Sample paths of the associated temperature with $u=0$ at the point $\tilde x$.}
\label{fig:ex02_sample_temperatures}
\end{subfigure}
\caption{Sample paths of the uncontrolled system.}
\label{fig:ex02_energy}
\end{figure}

We ran 50 iterations of the AdaGrad method with step size parameters $\eta=0.1$ and $b_0=1$, an initial guess of $u_0=0$, and a target state of $y_d=18^\circ\mathrm{C}$. Figures \ref{fig:ex02_convergence_cost} and \ref{fig:ex02_convergence_gradient} show the convergence of the cost functional and of the gradient norm respectively. The persistence of significant random perturbations in the cost indicates far higher levels of variance in this system than were observed in Example \ref{example:1}. 

\begin{figure}[ht!]
\centering
\begin{subfigure}[t]{0.48\textwidth}
\centering
\includegraphics[width=\textwidth]{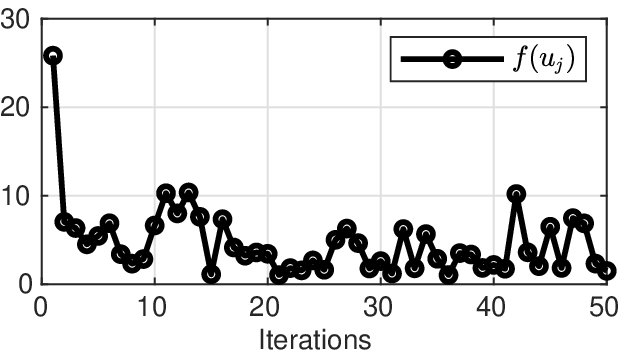}
\caption{Convergence of the cost.}
\label{fig:ex02_convergence_cost}
\end{subfigure}
\hfill
\begin{subfigure}[t]{0.48\textwidth}
\centering
\includegraphics[width=\textwidth]{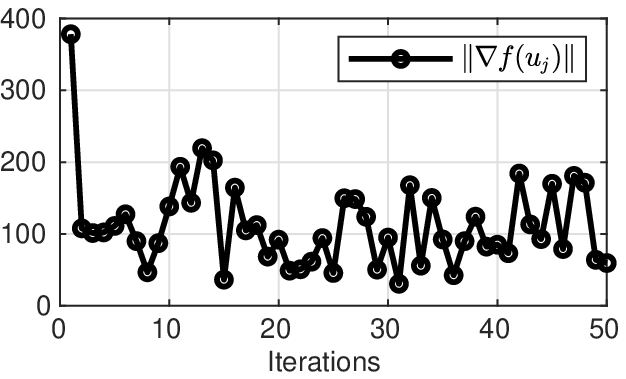}
\caption{Convergence of the gradient norm.}
\label{fig:ex02_convergence_gradient}
\end{subfigure}
\caption{Convergence of the AdaGrad Method for Example \ref{example:2} over 50 iterations.}
\label{fig:ex02_convergence}
\end{figure}

Figure \ref{fig:ex02_controller_convergence} depicts the time evolution of the control iterates, evaluated at the sample point $\tilde x$, and shows how these converge to an optimal control counteracting the heat effect of the two uncertain pulses.  
 
\begin{figure}
\centering 
\includegraphics[scale=1]{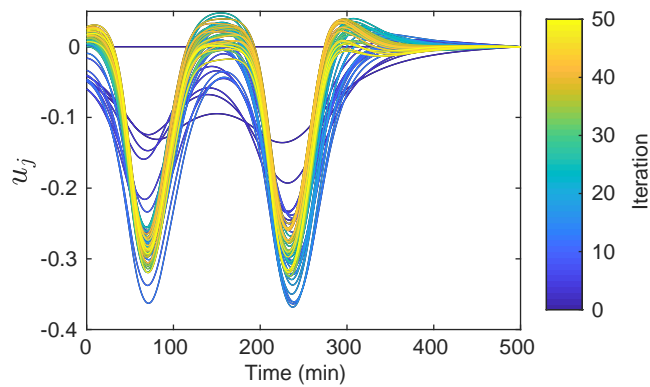}
\caption{Iterates of the control function generated by the AdaGrad method, evaluated at the sample point $\tilde x$.}
\label{fig:ex02_controller_convergence}
\end{figure}

To show that the deterministic controller obtained by this stochastic optimization algorithm leads to a cooling of the system under a variety of conditions, we compare the time evolution of the system's heat energy $E(t)=\int_D y(x,t)^2 dx$ for the initial control (Figure \ref{fig:ex02_energy_before}) and for the optimal control (Figure \ref{fig:ex02_energy_after}). Evidently the addition of the optimal control leads to a significant reduction in heat energy over the entire time period.

\begin{figure}[ht!]
\centering
\begin{subfigure}[t]{0.48\textwidth}
\centering
\includegraphics[width=\textwidth]{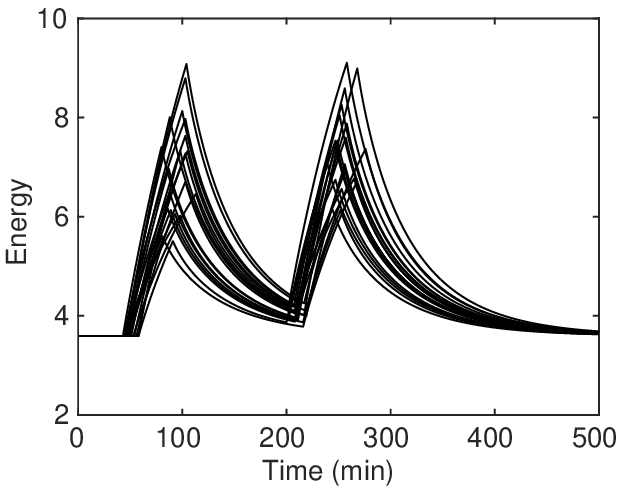}
\caption{Evolution of the heat energy with no control.}
\label{fig:ex02_energy_before}
\end{subfigure}
\hfill
\begin{subfigure}[t]{0.48\textwidth}
\centering
\includegraphics[width=\textwidth]{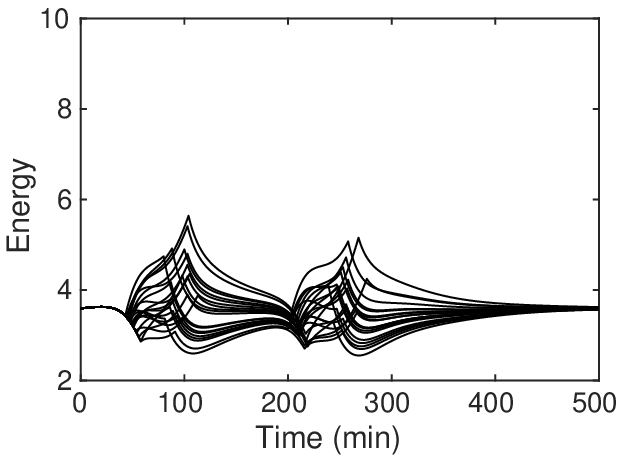}
\caption{Evolution of the heat energy with optimal control.}
\label{fig:ex02_energy_after}
\end{subfigure}
\caption{Comparison of the heat energy over time for the uncontrolled system (left) with that of the controlled one (right).}
\label{fig:ex02_energy}
\end{figure}

\end{example}

\section{Conclusion}\label{section:conclusion}

In this paper we have extended the AdaGrad method to an infinite dimensional optimal control problem for the distributed control of a linear parabolic system. We  related smoothness and finite variance requirements to the statistical distributions of the underlying model parameters and demonstrated how these can be used in thermal regulation of a simple model for an uncertain lithium battery system. It remains to explore how optimal control can be extended to more complex uncertain systems, how this approach can be incorporated into existing battery management systems, and how existing battery degradation models and real-time observers can be used in designing such controls.

\bibliographystyle{plain}
\bibliography{bibliography}
\end{document}